\newtheorem{Teo}{Theorem}[section]
\newtheorem{Prop}[Teo]{Proposition}
\newtheorem{Lema}[Teo]{Lemma}
\newtheorem{Cor}[Teo]{Corollary}
\theoremstyle{definition}
\newtheorem{Def}[Teo]{Definition}
\newtheorem{Obs}[Teo]{Remark}
\newtheorem{Exa}[Teo]{Example}
\newcommand{\Q}{\mathbb{Q}}
\newcommand{\N}{\mathbb{N}}
\newcommand{\Llr}{\Longleftrightarrow}
\newcommand{\lra}{\longrightarrow}
\newcommand{\MI}{\mathfrak{m}}
\newcommand{\QF}{\mbox{\rm Quot}}
\newcommand{\inv}{\mbox{\rm in}}
\begin{document}
\title[Generating sequence]{Generating sequences and key polynomials}
\author{M. S. Barnab\'e}
\author{J. Novacoski}
\thanks{During the realization of this project Novacoski was supported by a research grant from Funda\c c\~ao de Amparo \`a Pesquisa do Estado de S\~ao Paulo (process number 2017/17835-9).}

\begin{abstract}
The main goal of this paper is to study the different definitions of generating sequences appearing in the literature. We present these definitions and show that under certain situations they are equivalent. We also present an example that shows that they are not, in general, equivalent. We also present the relation of generating sequences and key polynomials.
\end{abstract}

\keywords{Valuations, associated graded ring, generating sequences, key polynomials}
\subjclass[2010]{Primary 13A18}
\maketitle

\section{Introduction}
Given a valuation $\nu$ on a ring $R$, a generating sequence is a subset $\textbf{Q}$ of $R$ that completely determines the valuation $\nu$. The formalization of this idea appears in different works in slightly different ways. One of the main goals of this paper is to stablish the relation between these different definitions.

The concept of graded algebra is closely related to generating sequences. Essentially, a generating sequence is a set whose images generate the graded algebra. Graded algebras play an important role to understand extensions of valuations (see for instance, \cite{Her_1}, \cite{Her_2}, \cite{Vaq_1} and \cite{Vaq_2}). Graded algebras are also central objects in the approach of Teissier for local uniformization (see \cite{Tei} and \cite{Tei2}).

Let $R$ be a ring and $\nu$ a valuation on $R$. We denote by $R^\times$  the set of units of $R$. When $R=K$ is a field, we denote by $K\nu$ the residue field and by $z\nu$ the image in the residue field of $z \in \mathcal{O}_{\nu}$. We denote by $\N_0$ the set of non-negative integers. For $\textbf{Q}\subseteq R$ we will denote by
\[
\N_0^{\textbf{Q}}=\{\lambda:\textbf{Q}\lra \N_0\mid\lambda(Q)\neq 0\mbox{ for only finitely many }Q\in\textbf{Q}\}
\]
and
\[
\textbf{Q}^\lambda=\prod_{\lambda(Q)\neq 0}Q^{\lambda(Q)}\in R.
\]
For each $\gamma\in \nu(R)$, we consider the sets
\begin{equation*}
\mathcal{P}_{\gamma}=\{f \in R \mid \nu(f) \geq \gamma\} \mbox{ and } \mathcal{P}^{+}_{\gamma}=\{f \in R \mid \nu(f) > \gamma\}.
\end{equation*}
The \textbf{graded ring of $R$ associated to $\nu$} is defined as
\begin{equation*}
{\rm gr}_\nu(R)= \displaystyle  \bigoplus_{\gamma \in \nu(R)} \mathcal{P}_{\gamma}/\mathcal{P}^{+}_{\gamma}.
\end{equation*}
The addition on ${\rm gr}_\nu(R)$ is given by the abelian group structure and the multiplication is given explicitly by
\[
\left(f+\mathcal{P}_{\nu(f)}^+\right)\cdot\left(g+\mathcal{P}_{\nu(g)}^+\right):=\left(fg+\mathcal{P}_{\nu(f)+\nu(g)}^+\right)
\]
and extending it to ${\rm gr}_\nu(R)$ in the obvious way.

In \cite{Her_2}, the definition of generating sequence is a slight variation of the following (see discussion in Section \ref{secsequnckeypolgen}). A set $\textbf{Q}\subseteq R$ satisfies \textbf{(GS1)} if for every $\gamma\in\nu(R)$ the abelian group $\mathcal P_\gamma$ is generated by
\[
\left\{a\textbf{Q}^\lambda\mid \nu(a\textbf{Q}^\lambda)\geq \gamma\mbox{ where }\lambda\in\N_0^\textbf{Q} \mbox{ and }a\in R^\times \right\}.
\]

If $\nu$ is centered on $R$ (i.e., $\nu(f)\geq 0$ for every $f\in R$), then $\mathcal{P}_{\gamma}$ and $\mathcal{P}^{+}_{\gamma}$ are ideals of $R$. Moreover, if we set
\[
\MI:=\mathcal P_0^+=\{f\in R\mid \nu(f)>0\},
\]
then $\MI$ is a prime ideal and for each $\gamma\in\nu(R)$ we have that $\mathcal P_\gamma/\mathcal P_\gamma^+$ is an $R/\MI$-module. In this case, ${\rm gr}_\nu(R)$ is an $R/\MI$-algebra, that will be called the \textbf{graded algebra of $R$ associated to $\nu$}. The definition of generating sequence in \cite{Ghes} and \cite{Spi_2} is the following. If $\nu$ is centered at $R$, then $\textbf{Q}\subseteq R$ satisfies \textbf{(GS2)} if for every $\gamma\in\nu(R)$ the ideal $\mathcal P_\gamma$ is generated by
\[
\left\{\textbf{Q}^\lambda\mid \nu(\textbf{Q}^\lambda)\geq \gamma\mbox{ where }\lambda\in\N_0^\textbf{Q}\right\}.
\]

We observe that if $\nu$ is centered, then \textbf{(GS1)} implies \textbf{(GS2)} and that \textbf{(GS2)} only makes sense for centered valuations. Moreover, when dealing with key polynomials for a valuation $\nu$ on $K[x]$, the case when $\nu$ is centered is not interesting (see Lemma \ref{keypolnotinterscenters}).

The definition of a generating sequence in \cite{Cutgen}, \cite{Mour} and \cite{Vihn} is the following. If $\nu$ is centered at $R$, then $\textbf{Q}\subseteq R$ satisfies \textbf{(GS3)} if the set
\[
{\rm in}_\nu(\textbf{Q}):=\{{\rm in}_\nu(Q)\mid Q\in\textbf{Q}\}
\]
generates ${\rm gr}_\nu(R)$ as an $R/\MI$-algebra.

One of the main goals of this paper is to stablish the relation between these different definitions. More specifically, we prove the following.
\begin{Teo}\label{maintheomr}
Let $R$ be a ring, $\nu$ a valuation on $R$ and $\textbf{Q}$ a subset of $R$. 
\begin{description}
\item[(i)] If $\nu$ is centered and \textbf{Q} satisfies \textbf{(GS2)}, then for every $\gamma\in\nu(R)$ the group $\mathcal P_\gamma/\mathcal P_\gamma^+$ is generated by 
\[
\left\{a\textbf{Q}^\lambda+\mathcal P_\gamma^+\mid \lambda\in\N_0^\textbf{Q},\nu(\textbf{Q}^\lambda)=\gamma\mbox{ and }\nu(a)=0\right\}.
\]
In particular, \textbf{Q} satisfies \textbf{(GS3)}.

\item[(ii)] If \textbf{Q} satisfies \textbf{(GS3)}, then the semigroup $\nu(R)$ is generated by
\[
\nu(\textbf{Q}):=\{\nu(Q)\mid Q\in\textbf{Q}\}.
\]
Conversely, if $R$ is a domain with $K=\QF(R)$, $K\nu=R/\MI$ and $\nu(\textbf{Q})$ generates $\nu(R)$, then \textbf{(GS3)} is satisfied.
\end{description}
\end{Teo}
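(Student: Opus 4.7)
The plan is to dispatch part (i) directly from \textbf{(GS2)} and then split part (ii) into its two implications, each of which reduces to a short graded-ring calculation.

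For part (i), I take $f\in\mathcal P_\gamma$; if $\nu(f)>\gamma$ then $f+\mathcal P_\gamma^+=0$, so I may assume $\nu(f)=\gamma$. Applying \textbf{(GS2)} produces a representation
\[
f=\sum_{i} r_i \textbf{Q}^{\lambda_i},\qquad r_i\in R,\quad \nu(\textbf{Q}^{\lambda_i})\geq \gamma.
\]
Because $\nu$ is centered, $\nu(r_i)\geq 0$, hence $\nu(r_i\textbf{Q}^{\lambda_i})\geq \gamma$; any summand with strict inequality lies in $\mathcal P_\gamma^+$ and can be discarded modulo $\mathcal P_\gamma^+$. Each surviving summand then forces $\nu(r_i)=0$ and $\nu(\textbf{Q}^{\lambda_i})=\gamma$, which gives exactly the asserted generating set. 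The ``in particular'' part follows because in ${\rm gr}_\nu(R)$ one has $r_i\textbf{Q}^{\lambda_i}+\mathcal P_\gamma^+=(r_i+\MI)\prod_{Q}\inv_\nu(Q)^{\lambda_i(Q)}$, so $\inv_\nu(\textbf{Q})$ generates the graded algebra over $R/\MI$.

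For the forward direction of (ii), given $\gamma\in\nu(R)$ I choose $f\in R$ with $\nu(f)=\gamma$ and use \textbf{(GS3)} to write $\inv_\nu(f)\in{\rm gr}_\nu(R)$ as an $R/\MI$-polynomial in $\inv_\nu(\textbf{Q})$. Decomposing this polynomial by the degree of each monomial in the grading of ${\rm gr}_\nu(R)$ and using that $\inv_\nu(f)$ is homogeneous of degree $\gamma$, the degree-$\gamma$ component must itself equal $\inv_\nu(f)\neq 0$. Hence at least one monomial $(a+\MI)\prod_{Q}\inv_\nu(Q)^{\lambda(Q)}$ of total degree $\gamma$ is nonzero, which forces both $\inv_\nu(Q)\neq 0$ for each $Q$ in the support of $\lambda$ and the equality $\gamma=\sum_{Q}\lambda(Q)\nu(Q)$. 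This exhibits $\gamma$ as a non-negative integer combination of elements of $\nu(\textbf{Q})$; the case $\gamma=0$ is handled by the empty sum.

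For the converse of (ii), assume $R$ is a domain, $K\nu=R/\MI$, and $\nu(\textbf{Q})$ generates $\nu(R)$. I must show every homogeneous element of ${\rm gr}_\nu(R)$ lies in the $R/\MI$-subalgebra generated by $\inv_\nu(\textbf{Q})$. Given $f\in R\setminus\{0\}$ with $\nu(f)=\gamma$, use the semigroup hypothesis to write $\gamma=\sum_{Q}\lambda(Q)\nu(Q)$ and set $g:=\textbf{Q}^\lambda\in R\setminus\{0\}$, so $\nu(g)=\gamma$. Then $f/g\in K$ has $\nu$-value $0$, so its residue lies in $K\nu$; invoking $K\nu=R/\MI$, this residue lifts to some $a\in R$ with $\nu(a)=0$ and $\nu(f-ag)>\gamma$. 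Consequently
\[
\inv_\nu(f)=(a+\MI)\prod_{Q}\inv_\nu(Q)^{\lambda(Q)}
\]
in ${\rm gr}_\nu(R)$, which is precisely \textbf{(GS3)}. The main obstacle is this final construction of $a$: both the domain assumption, needed to form $f/g$ inside $K$, and the identification $K\nu=R/\MI$, needed to lift the resulting residue back to an element of $R$, are essential—without them one cannot express $\inv_\nu(f)$ in terms of $\inv_\nu(\textbf{Q})$ alone.
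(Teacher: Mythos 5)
Your proof is correct and follows essentially the same route as the paper's: in (i) you isolate the summands of value exactly $\gamma$ and discard the rest modulo $\mathcal P_\gamma^+$, and in (ii) you compare homogeneous components for the forward direction and use $K\nu=R/\MI$ to produce the unit multiplier $a$ for the converse. The only cosmetic difference is that in (i) you work directly with coset addition in $\mathcal P_\gamma/\mathcal P_\gamma^+$ instead of invoking Lemma \ref{anotherlemasobregradeqseque}.
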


In Section \ref{proofomaintheom} we present an example that shows that the converse of Theorem \ref{maintheomr} \textbf{(i)} does not hold in general.

We also study the relation of generating sequences and key polynomials for valuations $\nu$ defined on $K[x]$. The interesting case of study for key polynomials is for valuations which are not trivial on $K$. Hence, we want to compare sequences of key polynomials which are complete and sequences satisfying \textbf{(GS1)}. We make a change on \textbf{(GS1)} when $R=K[x]$. We will say that $\textbf{Q}$ satisfies \textbf{(GS1$^*$)} if for every $f\in K[x]$ there exist $a_1,\ldots,a_r\in K$ and $\lambda_1,\ldots, \lambda_r\in \N_0^\textbf{Q}$, such that
\[
f=\sum_{i=1}^ra_i\textbf{Q}^{\lambda_i}\mbox{ with }\nu\left(a_i\textbf{Q}^{\lambda_i}\right)\geq \nu(f),\mbox{ for every }i, 1\leq i\leq r,
\]
and for every $i$, $1\leq i\leq r$, if $\lambda_i(Q)\neq 0$, then $\deg(Q)\leq\deg(f)$. Since $K[x]^\times=K^\times$, the only difference between \textbf{(GS1)} and \textbf{(GS1$^*$)} is the condition on the degrees.

Another important result of this paper is the following.
\begin{Teo}\label{theormjepolgensequ}
Let $\textbf{Q}$ be a set of key polynomials for $K[x]$. Then $\textbf{Q}$ is complete if and only if \textbf{Q} satisfies \textbf{(GS1$^*$)}.
\end{Teo}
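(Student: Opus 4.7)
The plan is to prove the two directions by induction on $\deg(f)$, leveraging the structural property that every key polynomial $Q$ allows a $Q$-Euclidean division on $K[x]$, and matching this to the working definition of completeness. Concretely, a complete set of key polynomials $\textbf{Q}$ should mean (and the paper has presumably set up this equivalent formulation earlier) that every $f\in K[x]$ admits a $\textbf{Q}$-standard expansion $f=\sum_i c_i\textbf{Q}^{\lambda_i}$ with $c_i\in K$, $\lambda_i\in\N_0^\textbf{Q}$, only key polynomials of degree $\le\deg(f)$ appearing, and such that $\nu(f)=\min_i\nu(c_i\textbf{Q}^{\lambda_i})$.

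For the direction ($\Rightarrow$), completeness immediately hands us the representation demanded by \textbf{(GS1$^*$)}: given $f$, take the $\textbf{Q}$-standard expansion, set $a_i:=c_i\in K=K[x]^\times\cup\{0\}$, drop the zero terms, and observe that the equality $\nu(f)=\min_i\nu(c_i\textbf{Q}^{\lambda_i})$ forces $\nu(a_i\textbf{Q}^{\lambda_i})\ge\nu(f)$ term by term, while the degree bound is already in the definition.

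For the direction ($\Leftarrow$), assume \textbf{(GS1$^*$)}. Given $f$, the condition supplies $f=\sum_{i=1}^r a_i\textbf{Q}^{\lambda_i}$ with $a_i\in K$, the degree restriction $\deg(Q)\le\deg(f)$ for every $Q$ appearing with $\lambda_i(Q)\ne 0$, and $\nu(a_i\textbf{Q}^{\lambda_i})\ge\nu(f)$ for all $i$. The ultrametric inequality gives $\nu(f)\ge\min_i\nu(a_i\textbf{Q}^{\lambda_i})\ge\nu(f)$, so the minimum is attained and equals $\nu(f)$. This is exactly the shape of a $\textbf{Q}$-standard expansion witnessing completeness of $\textbf{Q}$ at $f$; since $f$ was arbitrary, $\textbf{Q}$ is complete.

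The main obstacle is matching the loose sum guaranteed by \textbf{(GS1$^*$)} against the particular structural form of a standard expansion that the paper's definition of ``complete'' may require. If completeness is phrased via $Q$-adic expansions with strict bounds on individual exponents (e.g.\ $\lambda_i(Q)<\deg(Q')/\deg(Q)$ for the next key polynomial $Q'$), one needs an extra reduction step: apply Euclidean division by the highest-degree $Q$ appearing to rewrite offending monomials $\textbf{Q}^{\lambda_i}$, and verify by induction on $\deg(f)$ that this reduction preserves both the degree bound and the valuation estimate $\nu(\cdot)\ge\nu(f)$. Lemma~\ref{keypolnotinterscenters} and the remarks following \textbf{(GS1$^*$)} (that $K[x]^\times=K^\times$ so the units do not interfere with the degree count) are the cleanest inputs for this bookkeeping.
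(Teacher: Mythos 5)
Your argument rests on a guessed definition of ``complete'' that does not match the paper's. In Definition \ref{definitioncomplsetes}, $\textbf{Q}$ is complete when for every $f\in K[x]$ there exists a \emph{single} $Q\in\textbf{Q}$ with $\deg(Q)\le\deg(f)$ such that the truncation $\nu_Q(f)$ equals $\nu(f)$; it says nothing about expansions $f=\sum_i c_i\textbf{Q}^{\lambda_i}$ in monomials of $\textbf{Q}$. Once you replace completeness by the existence of such an expansion with $\nu(f)=\min_i\nu\left(c_i\textbf{Q}^{\lambda_i}\right)$, the theorem becomes essentially a restatement of \textbf{(GS1$^*$)}, and both of your directions are tautologies. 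The entire content of the theorem is the equivalence between the truncation condition and the expansion condition, and neither direction of that equivalence is addressed in your proposal; your closing paragraph about exponent bounds in standard expansions is aimed at a definition that does not occur in the paper.

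Concretely, for the direction from completeness to \textbf{(GS1$^*$)} the paper argues by induction on $\deg(f)$ (Proposition \ref{propsobrecomp}): completeness supplies $q\in\textbf{Q}$ with $\deg(q)\le\deg(f)$ and $\nu_q(f)=\nu(f)$; one writes the $q$-expansion $f=f_0+f_1q+\dots+f_sq^s$, applies the induction hypothesis to the coefficients $f_i$ (which have smaller degree), and uses $\nu_q(f)=\min_i\nu(f_iq^i)=\nu(f)$ to propagate the valuation inequality to the assembled expansion. For the converse, the genuinely nontrivial step is to pass from the sum $f=\sum_i a_i\textbf{Q}^{\lambda_i}$ back to a single truncation: one needs that $\nu_{Q'}$ is a valuation when $Q'$ is a key polynomial (Proposition \ref{proptruncakeypolval}) and that among the finitely many key polynomials occurring in the expansion one can choose $Q'$ with maximal $\epsilon$ so that $\nu_{Q'}(Q)=\nu(Q)$ for all of them (Corollary \ref{corsobreequaldepsoidkey}); then $\nu_{Q'}(f)\ge\min_i\nu_{Q'}\left(a_i\textbf{Q}^{\lambda_i}\right)=\min_i\nu\left(a_i\textbf{Q}^{\lambda_i}\right)\ge\nu(f)\ge\nu_{Q'}(f)$ forces $\nu_{Q'}(f)=\nu(f)$. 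This is where the hypothesis that the elements of $\textbf{Q}$ are key polynomials is actually used, and it is entirely absent from your argument; by contrast, the forward direction holds for any complete set, key polynomials or not.
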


This paper is divided as follows. In Section \ref{preliminaires} we present a few basic results about graded algebras. In Section \ref{proofomaintheom} we present the proof of Theorem \ref{maintheomr} as well as the example that shows that its converse is not satisfied. In Section \ref{keypolynomials} we present the definition and the main results about key polynomials (as in \cite{SopivNova}). We also present some results that do not appear in \cite{SopivNova} but will be needed here. Finally, in Section \ref{secsequnckeypolgen} we present the relation between key polynomials and generating sequences.

\textbf{Acknowledgements.} We would like to thank the anonymous referee for a careful reading and for pointing out a few mistakes in an earlier version of this paper.

\section{Preliminaries}\label{preliminaires}
Let $R$ be a ring and $\nu$ a valuation on $R$. For $f\in R$ we will denote by ${\rm in}_\nu(f)$ the image of $f$ in
\[
\mathcal P_{\nu(f)}/\mathcal P_{\nu(f)}^+\subseteq{\rm gr}_\nu(R).
\]
We have the following properties in ${\rm gr}_\nu(R)$.
\begin{Lema}\label{basicpropestisgradealg}
Take $f,g\in R$.
\begin{description}
\item[(i)] $\inv_\nu(f)\cdot \inv_\nu(g)=\inv_\nu(fg)$.
\item[(ii)] $\inv_\nu(f)=\inv_\nu(g)$ if and only if $\nu(f)=\nu(g)\mbox{ and }\nu(f-g)>\nu(f)$.
\item[(iii)] If $\nu(f)<\nu(g)$, then $\inv_\nu(f)=\inv_\nu(f+g)$.
\item[(iv)] $\inv_\nu(f+g)=\inv_\nu(f)+\inv_\nu(g)$ if and only if $\nu(f)=\nu(g)=\nu(f+g)$.
\end{description}
\end{Lema}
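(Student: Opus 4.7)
The plan is to derive all four items directly from the definitions given in the excerpt — the direct-sum structure of ${\rm gr}_\nu(R)$, the explicit multiplication, and the definition of $\inv_\nu(f)$ as the image of $f$ in the homogeneous component $\mathcal{P}_{\nu(f)}/\mathcal{P}^+_{\nu(f)}$. The only external fact I would use is the ultrametric inequality $\nu(f+g) \geq \min\{\nu(f),\nu(g)\}$, with equality when the two valuations differ. Item (i) is immediate: since $\nu(fg) = \nu(f)+\nu(g)$, the product in the graded ring is, by its very definition, $fg + \mathcal{P}^+_{\nu(f)+\nu(g)} = \inv_\nu(fg)$.

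For (ii), observe first that $\inv_\nu(f)$ is a nonzero element of $\mathcal{P}_{\nu(f)}/\mathcal{P}^+_{\nu(f)}$ (nonzero because $\nu(f) \not> \nu(f)$ forces $f \notin \mathcal{P}^+_{\nu(f)}$). Hence $\inv_\nu(f) = \inv_\nu(g)$ requires both representatives to lie in the same homogeneous component, so $\nu(f) = \nu(g) =: \gamma$; and the equality within that component unwinds to $f - g \in \mathcal{P}^+_\gamma$, i.e., $\nu(f-g) > \gamma$. The reverse implication is immediate. Item (iii) is then a corollary: if $\nu(f) < \nu(g)$, the ultrametric inequality gives $\nu(f+g) = \nu(f)$, and since $\nu(f - (f+g)) = \nu(g) > \nu(f)$, part (ii) applied to $f$ and $f+g$ yields $\inv_\nu(f) = \inv_\nu(f+g)$.

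For (iv) I would split on whether $\nu(f) = \nu(g)$. If they agree, set $\gamma := \nu(f) = \nu(g)$; then $\inv_\nu(f) + \inv_\nu(g)$ lives in $\mathcal{P}_\gamma/\mathcal{P}^+_\gamma$ and equals $(f+g) + \mathcal{P}^+_\gamma$, which matches $\inv_\nu(f+g)$ precisely when $\nu(f+g) = \gamma$, giving both implications in this case. If instead $\nu(f) \neq \nu(g)$, say $\nu(f) < \nu(g)$, then (iii) gives $\inv_\nu(f+g) = \inv_\nu(f)$, so the claimed equality would force $\inv_\nu(g) = 0$ in ${\rm gr}_\nu(R)$, contradicting $g \notin \mathcal{P}^+_{\nu(g)}$; hence equality cannot hold in this case, which is exactly what the biconditional requires. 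I do not anticipate any real obstacle; the only point to watch is to track the grading carefully so that each identity is read inside the correct homogeneous component and so that the sum appearing in (iv) is interpreted inside the direct-sum decomposition of ${\rm gr}_\nu(R)$.
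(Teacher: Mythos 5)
Your proposal is correct and follows essentially the same route as the paper: (i) directly from the definition of the graded multiplication, (ii) by separating the cases of different versus equal homogeneous components, (iii) from the ultrametric equality combined with (ii), and (iv) by direct computation in one component together with (iii) for the converse. No substantive differences.
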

\begin{proof}
Item \textbf{(i)} follows directly from the definition. For \textbf{(ii)} we observe that if $\nu(f)\neq \nu(g)$, then $\inv_\nu(f)\neq \inv_\nu(g)$ because they are in different components of ${\rm gr}_\nu(R)$. Moreover, in the case $\nu(f)= \nu(g)$, we have that $\inv_\nu(f)=\inv_\nu(g)$ means that $(f-g)\in\mathcal P_{\nu(f)}^+$, which means that $\nu(f-g)>\nu(f)$.

Assume that $\nu(f)<\nu(g)$. Then $\nu(f+g)=\nu(f)<\nu(g)$, hence by item \textbf{(ii)} we have $\inv_\nu(f)=\inv_\nu(f+g)$. In order to prove \textbf{(iv)}, we observe that if $\nu(f)=\nu(g)=\nu(f+g)$, then
\begin{displaymath}
\begin{array}{rcl}
\inv_\nu(f+g)&=&(f+g)+\mathcal P_{\nu(f)}^+=(f+\mathcal P_{\nu(f)}^+)+(g+\mathcal P_{\nu(g)}^+)\\[8pt]
&=&\inv_\nu(f)+\inv_\nu(g).
\end{array}
\end{displaymath}
The converse follows directly from \textbf{(iii)}.
\end{proof}

\begin{Lema}\label{anotherlemasobregradeqseque}
Assume that
\[
\inv_\nu(f)=\inv_\nu\left(\sum_{i=1}^rf_i\right)\mbox{ for some }f_1,\ldots,f_r\in R,
\]
with $\nu(f)=\nu(f_i)$ for every $i$, $1\leq i\leq r$. Then there exists $I\subseteq \{1,\ldots,r\}$ such that
\[
\inv_\nu(f)=\sum_{i\in I}\inv_\nu\left(f_i\right).
\]
\end{Lema}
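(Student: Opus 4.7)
The plan is to proceed by induction on $r$. The base case $r=1$ is immediate, since then the hypothesis already reads $\inv_\nu(f)=\inv_\nu(f_1)$ and one takes $I=\{1\}$.

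For the inductive step, set $\gamma=\nu(f)$ and $g=\sum_{i=1}^{r-1}f_i$. The ultrametric triangle inequality together with $\nu(f_i)=\gamma$ for each $i\leq r-1$ forces $\nu(g)\geq\gamma$, so exactly two cases can occur. If $\nu(g)>\gamma=\nu(f_r)$, then Lemma~\ref{basicpropestisgradealg}\textbf{(iii)} gives $\inv_\nu(g+f_r)=\inv_\nu(f_r)$, and the hypothesis $\inv_\nu(f)=\inv_\nu(g+f_r)$ lets us take $I=\{r\}$.

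If instead $\nu(g)=\gamma$, I first observe that $\nu(g+f_r)=\gamma$: indeed by Lemma~\ref{basicpropestisgradealg}\textbf{(ii)}, the equality $\inv_\nu(f)=\inv_\nu(g+f_r)$ forces $\nu(g+f_r)=\nu(f)=\gamma$. Now Lemma~\ref{basicpropestisgradealg}\textbf{(iv)} applies to give
\[
\inv_\nu(f)=\inv_\nu(g+f_r)=\inv_\nu(g)+\inv_\nu(f_r).
\]
Since $\nu(g)=\gamma=\nu(f_i)$ for every $i\leq r-1$ and trivially $\inv_\nu(g)=\inv_\nu\left(\sum_{i=1}^{r-1}f_i\right)$, the induction hypothesis applied to $g$ yields a subset $I'\subseteq\{1,\ldots,r-1\}$ with $\inv_\nu(g)=\sum_{i\in I'}\inv_\nu(f_i)$. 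Setting $I=I'\cup\{r\}$ finishes the induction.

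The only subtle point is verifying in the second case that no further degeneracy can occur, i.e.\ that one cannot have $\nu(g+f_r)>\gamma$; this is handled by reading off $\nu(g+f_r)$ from the hypothesis via part \textbf{(ii)} of the previous lemma, which is the main technical pivot of the argument. Everything else is a straightforward case split dictated by Lemma~\ref{basicpropestisgradealg}.
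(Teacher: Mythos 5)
Your proof is correct and follows essentially the same route as the paper's: induction on $r$ with a two-case split via parts \textbf{(iii)} and \textbf{(iv)} of Lemma \ref{basicpropestisgradealg} (you peel off $f_r$ where the paper peels off $f_1$, an immaterial difference). If anything, you are slightly more careful than the paper in explicitly checking via part \textbf{(ii)} that $\nu(g+f_r)=\gamma$, which is needed to invoke part \textbf{(iv)}.
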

\begin{proof}
We will prove by induction on $r$. If $r=1$, there is nothing to prove. Assume that $r>1$ and that the result holds for $r-1$. If $\displaystyle\nu(f_1)<\nu\left(\sum_{i=2}^rf_i\right)$, then by Lemma \ref{basicpropestisgradealg} \textbf{(iii)} we have
\[
\inv_\nu(f_1)=\inv_\nu\left(f_1+\sum_{i=2}^rf_i\right)=\inv_\nu(f).
\]
On the other hand, if $\displaystyle\nu(f_1)=\nu\left(\sum_{i=2}^rf_i\right)$, then by Lemma \ref{basicpropestisgradealg} \textbf{(iv)} we have
\[
\inv_\nu(f)=\inv_\nu\left(f_1+\sum_{i=2}^rf_i\right)=\inv_\nu(f_1)+\inv_\nu\left(\sum_{i=2}^rf_i\right)
\]
and the result follows by the induction hypothesis.
\end{proof}

\section{Proof of Theorem \ref{maintheomr}}\label{proofomaintheom}
\begin{proof}[Proof of Theorem \ref{maintheomr}]
In order to prove \textbf{(i)}, assume that \textbf{Q} satisfies \textbf{(GS2)}. 
Take $\gamma\in\nu(R)$ and choose $f\in R$ such that $\nu(f)= \gamma$. Since $\mathcal P_\gamma$ is generated as an ideal by
\[
\left\{\textbf{Q}^\lambda\mid \lambda\in\N_0^\textbf{Q}\mbox{ and }\nu\left(\textbf{Q}^\lambda\right)\geq \gamma\right\}
\]
there exist $a_1,\ldots,a_r\in R$ and $\lambda_1,\ldots,\lambda_r\in \N_0^{\textbf{Q}}$ such that
\begin{equation}\label{equaajudanaproga}
f=\sum_{i=1}^ra_i\textbf{Q}^{\lambda_i}\mbox{ and }\nu\left(\textbf{Q}^{\lambda_i}\right)\geq \gamma=\nu(f)\mbox{ for every }i,1\leq i\leq r.
\end{equation}
Let
\[
I=\{i\in\{1,\ldots,r\}\mid \nu(a_i)=0\mbox{ and }\nu\left(\textbf{Q}^{\lambda_i}\right)=\gamma\}.
\]
It follows from \eqref{equaajudanaproga} that $I\neq\emptyset$. Then
\[
\nu\left(f-\sum_{i\in I}a_i\textbf{Q}^{\lambda_i}\right)=\nu\left(\sum_{i\notin I}a_i\textbf{Q}^{\lambda_i}\right)>\gamma.
\]
Therefore, $f-\displaystyle\sum_{i\in I}a_i\textbf{Q}^{\lambda_i}\in \mathcal P_\gamma^+$. Moreover, in this case (using Lemma \ref{anotherlemasobregradeqseque})
\[
{\rm in}_\nu(f)=\inv_\nu\left(\sum_{i\in I}a_i\textbf{Q}^{\lambda_i}\right)=\sum_{i\in I'}\inv_\nu\left(a_i\textbf{Q}^{\lambda_i}\right)=\sum_{i\in I'}\overline{a_i}\left(\inv_\nu\left(\textbf{Q}\right)\right)^{\lambda_i}
\]
for some $I'\subseteq I$. Therefore, \textbf{Q} satisfies \textbf{(GS3)}.

In order to prove \textbf{(ii)} assume that $\textbf{Q}\subseteq R$ satisfies \textbf{(GS3)}. Take $\gamma\in \nu(R)$ and $f\in R$ such that $\nu(f)=\gamma$. By our assumption, there exist $a_1,\ldots,a_r\in R\setminus \MI$ and $\lambda_1,\ldots,\lambda_r\in \N_0^{\textbf{Q}}$ such that
\begin{equation}\label{esobrerelasgendseq}
\inv_\nu(f)=\sum_{i=1}^r\overline{a_i}\inv_\nu\left(\textbf{Q}^{\lambda_i}\right).
\end{equation}
Since $\inv_\nu(f)$ is homogeneous, we can assume that the elements on the right of \eqref{esobrerelasgendseq} are homogeneous and are in the same homogeneous component as $\inv_\nu(f)$. This means that for each of such $i$ we have
\[
\gamma=\nu(f)=\nu\left(\textbf{Q}^{\lambda_i}\right)=\sum_{\lambda_i(Q)\neq 0}\lambda_i(Q)\nu(Q),
\]
hence $\nu(R)$ is generated by $\nu(\textbf{Q})$.

Now assume that $K\nu=R/\MI$ and that $\nu(R)$ is generated by $\nu(\textbf{Q})$. For $f\in R$ there exist $n_1,\ldots,n_r\in\N$ and $Q_1,\ldots,Q_r\in \textbf{Q}$ such that
\[
\nu(f)=\sum_{i=1}^rn_i\nu(Q_i)=\nu\left(\prod_{i=1}^rQ_i^{n_i}\right)
\]
Since $K\nu=R/\MI$, there exists $z\in R\setminus \MI$ such that
\[
z\nu=\frac{f}{\displaystyle\prod_{i=1}^rQ_i^{n_i}}\nu.
\]
This means that $\nu\left(f-\displaystyle z\prod_{i=1}^rQ_i^{n_i}\right)>\nu(f)$ and consequently
\[
\inv_\nu(f)=\inv_\nu\left(z\prod_{i=1}^rQ_i^{n_i}\right)=\overline{z}\prod_{i=1}^r\left(\inv_\nu(Q_i)\right)^{n_i}.
\]
\end{proof}

\begin{Exa}
This is to show that the converse of Theorem \ref{maintheomr} \textbf{(i)} does not hold in general. Consider a field $k$ and the valuation on $k[x,y]$ induced by the embedding of $k[x,y]$ in $k((t^\Q))$ defined by
\[
x\longmapsto t\mbox{ and }y\longmapsto\sum_{i=1}^\infty t^{i^2}=t+t^4+t^9+t^{16}+\ldots,
\]
where $\nu(k^\times)=0$ and $\nu(t)=1$. For every $p(x,y)\in k[x,y]$ there exists $n_0\in\N$ and $a_{n_0}\in k$ such that
\[
\nu(p(x,y)-a_{n_0}x^{n_0})>n_0.
\]
Indeed, we can write
\[
p\left(t,\sum_{i=1}^\infty t^{i^2}\right)=\sum_{k=n_0}^\infty a_kt^k.
\]
Hence,
\[
\nu\left(p(x,y)-a_{n_0}x^{n_0}\right)=\nu_t\left(p\left(t,\sum_{i=1}^\infty t^{i^2}\right)-a_{n_0}t^{n_0}\right)>n_0.
\]
Since $\nu\left(a_{n_0}x^{n_0}\right)=n_0$ we have that
\[
\inv_\nu(p)=\inv_\nu(a_{n_0}x^{n_0})=\overline{a_{n_0}}\inv_\nu(x)^{n_0}.
\]
Therefore, $\textbf{Q}=\{x\}$ satisfies \textbf{(GS3)}.

However, $x$ and $y$ belong to ideal $\mathcal P_{\gamma}$, where $\gamma=1$, and since $x$ and $y$ are algebraically independent over $k$, for every $p_1,\ldots,p_r\in k[x,y]$ we have
\[
y\neq\sum_{i=1}^rp_ix^i.
\]
Therefore, $\textbf{Q}=\{x\}$ does not satisfy \textbf{(GS2)}.
\end{Exa}

\section{Key polynomials}\label{keypolynomials}
In order to define a key polynomial, we will need to define the number $\epsilon(f)$ for $f\in K[x]$. Let $\Gamma'=\Gamma\otimes \Q$ be the divisible hull of $\Gamma$. For a polynomial $f\in K[x]$ and $k\in\N$, we consider
\[
\partial_k(f):=\frac{1}{k!}\frac{d^kf}{dx^k},
\]
the so called Hasse-derivative of $f$ of order $k$. Let
\[
\epsilon(f)=\max_{k\in \N}\left\{\frac{\nu(f)-\nu(\partial_kf)}{k}\right\}\in \Gamma'.
\]
\begin{Def}
A monic polynomial $Q\in K[x]$ is said to be a \textbf{key polynomial} (of level $\epsilon (Q)$) if for every $f\in K[x]$ if $\epsilon(f)\geq \epsilon(Q)$, then $\deg(f)\geq\deg(Q)$.
\end{Def}

The next result is a characterization for $\epsilon(f)$. Consider an extension $\mu$ of $\nu$ to $\overline K[x]$ (here $\overline K$ denotes an algebraic closure of $K$). For a polynomial $f\in K[x]$, we define
\[
\delta(f)=\max\{\mu(x-a)\mid a\mbox{ is a root of }f\}.
\]
\begin{Prop}[Proposition 3.1 of \cite{Nov}]
If $f\in K[x]$ is a monic polynomial, then $\epsilon(f)=\delta(f)$.
\end{Prop}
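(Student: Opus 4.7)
The plan is to factor $f$ over $\overline K$ and read both $\epsilon(f)$ and $\delta(f)$ from that factorization. Write $f=\prod_{i=1}^n(x-a_i)$ with $a_i\in\overline K$ listed according to multiplicity, and reorder so that $\mu(x-a_1)\geq \mu(x-a_2)\geq\cdots\geq\mu(x-a_n)$. Let $m$ be the largest index with $\mu(x-a_m)=\delta(f)$, so that $m$ counts (with multiplicity) the roots achieving the maximum $\delta(f)$. Since $\mu$ extends $\nu$ multiplicatively, $\nu(f)=\sum_{i=1}^n\mu(x-a_i)$, and the Leibniz rule for Hasse derivatives (together with $\partial_j(x-a)$ being $x-a$, $1$, or $0$ for $j=0$, $1$, or $j\geq 2$) yields the key identity
\[
\partial_k f=\sum_{\substack{T\subseteq\{1,\ldots,n\}\\ |T|=k}}\prod_{i\notin T}(x-a_i).
\]

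First I would prove $\epsilon(f)\leq\delta(f)$. By the triangle inequality for $\mu$,
\[
\nu(\partial_k f)\geq \min_{|T|=k}\left(\nu(f)-\sum_{i\in T}\mu(x-a_i)\right)=\nu(f)-\sum_{i=1}^k\mu(x-a_i),
\]
where the second equality holds because $\sum_{i\in T}\mu(x-a_i)$ is maximized over $|T|=k$ by the choice $T=\{1,\ldots,k\}$. Since each $\mu(x-a_i)\leq\delta(f)$, this gives $(\nu(f)-\nu(\partial_k f))/k\leq\delta(f)$ for every $k\geq 1$.

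For the reverse inequality I would specialize to $k=m$. Among $m$-element subsets of $\{1,\ldots,n\}$, the sum $\sum_{i\in T}\mu(x-a_i)$ is strictly maximized at the unique choice $T_0:=\{1,\ldots,m\}$: any other $T$ must contain some $i>m$ with $\mu(x-a_i)<\delta(f)$ while omitting some $j\leq m$ with $\mu(x-a_j)=\delta(f)$, and the swap $i\leftrightarrow j$ strictly increases the sum. Consequently the summand of $\partial_m f$ indexed by $T_0$ has strictly smaller $\mu$-valuation than every other summand, and iterating Lemma \ref{basicpropestisgradealg} (iii) forces
\[
\nu(\partial_m f)=\mu\!\left(\prod_{i>m}(x-a_i)\right)=\nu(f)-m\,\delta(f),
\]
whence $(\nu(f)-\nu(\partial_m f))/m=\delta(f)$. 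The main obstacle is establishing the combinatorial Hasse-derivative identity and verifying the strict uniqueness of the maximizer $T_0$; once a single summand of $\partial_m f$ is isolated as uniquely smallest in valuation, the triangle inequality collapses to an equality at $k=m$ and the two bounds close the argument.
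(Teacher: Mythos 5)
Your proof is correct: the Hasse-derivative identity $\partial_k f=\sum_{|T|=k}\prod_{i\notin T}(x-a_i)$ is valid (it follows from the generalized Leibniz rule and holds in any characteristic), the upper bound via the ultrametric inequality is right, and at $k=m$ the subset $T_0=\{1,\dots,m\}$ is indeed the unique maximizer, so the corresponding summand is the unique one of minimal value and the inequality becomes an equality. Note that the paper itself gives no proof, quoting Proposition 3.1 of \cite{Nov}; the argument there is essentially the same factor-the-polynomial-over-$\overline K$ computation you carried out, so your proposal matches the intended proof.
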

\begin{Obs}
The condition of being monic can be dropped in the proposition above. This follows from the fact that for every $c\in K^\times$ we have that
\[
\epsilon(f)=\max_{k\in \N}\left\{\frac{\nu(f)-\nu(\partial_kf)}{k}\right\}=\max_{k\in \N}\left\{\frac{\nu(cf)-\nu(\partial_k(cf))}{k}\right\}=\epsilon(cf)
\]
and since $f$ and $cf$ have the same roots, we have $\delta(f)=\delta(cf)$.
\end{Obs}
\begin{Cor}
For $f,g\in K[x]$ we have $\epsilon(fg)=\max\{\epsilon(f),\epsilon(g)\}$.
\end{Cor}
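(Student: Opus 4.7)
The plan is to reduce everything to the geometric quantity $\delta$ via the preceding Proposition and Remark, which together give $\epsilon(h)=\delta(h)$ for every $h\in K[x]$ of positive degree (the Remark having removed the monic hypothesis of the Proposition). Thus it suffices to prove $\delta(fg)=\max\{\delta(f),\delta(g)\}$.

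This identity will follow from a triviality about roots. Factoring $fg$ in $\overline{K}[x]$, the multiset of roots of $fg$ is the union of the multisets of roots of $f$ and of $g$, so directly from the definition of $\delta$ one gets
\[
\delta(fg)=\max\{\mu(x-a)\mid a \text{ a root of } fg\}=\max\bigl(\{\mu(x-a)\mid a \text{ a root of } f\}\cup\{\mu(x-a)\mid a \text{ a root of } g\}\bigr)=\max\{\delta(f),\delta(g)\}.
\]

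The only point requiring mild care is the degenerate case in which $f$ or $g$ is a nonzero constant in $K^\times$: that factor has no roots in $\overline{K}$, so its $\delta$ is vacuous. This is handled exactly as in the Remark above: multiplication by a unit of $K$ leaves $\epsilon$ unchanged, so in that case $\epsilon(fg)=\epsilon(\text{non-constant factor})$, and the claimed equality holds under the natural convention $\epsilon(c)=-\infty$ for $c\in K^\times$ (or by simply excluding this case). I anticipate no real obstacle; the substantive content has already been done by the Proposition and the Remark, and what remains is only the bookkeeping identification of roots of a product.
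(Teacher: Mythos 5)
Your proof is correct and follows essentially the same route as the paper's: identify $\epsilon$ with $\delta$ via the Proposition (and the Remark removing the monic hypothesis) and observe that the roots of $fg$ are exactly the roots of $f$ together with those of $g$. Your extra remark about constant factors is a reasonable precaution that the paper's proof silently omits, but it does not change the argument.
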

\begin{proof}
We have that $a$ is a root of $fg$ if and only if it is a root of $f$ or $g$. Hence
\begin{displaymath}
\begin{array}{rcl}
\epsilon(fg)&=&\delta(fg)=\max\{\mu(x-a)\mid a\mbox{ is a root of }fg\}\\[8pt]
&=&\max\{\mu(x-a)\mid a\mbox{ is a root of }f\mbox{ or }g\}=\max\{\delta(f),\delta(g)\}\\[8pt]
&=&\max\{\epsilon(f),\epsilon(g)\}.
\end{array}
\end{displaymath}
\end{proof}
\begin{Cor}\label{proposdolivrokeypolirredu}
Every key polynomial is irreducible.
\end{Cor}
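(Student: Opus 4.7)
The plan is to argue by contradiction, using the previous corollary that $\epsilon$ is multiplicative in the sense $\epsilon(fg) = \max\{\epsilon(f), \epsilon(g)\}$, together with the defining property of a key polynomial. The whole argument should fit in one short display, but I will lay out the steps.

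First I would suppose that $Q$ is a key polynomial that factors as $Q = fg$ in $K[x]$ with neither $f$ nor $g$ a unit. Since the units of $K[x]$ are exactly $K^\times$, this means $\deg(f) \geq 1$ and $\deg(g) \geq 1$, so in particular both $\deg(f)$ and $\deg(g)$ are strictly less than $\deg(Q)$.

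Next I would apply the preceding corollary, which gives $\epsilon(Q) = \epsilon(fg) = \max\{\epsilon(f), \epsilon(g)\}$. Without loss of generality (after relabeling) assume $\epsilon(f) = \epsilon(Q)$, so in particular $\epsilon(f) \geq \epsilon(Q)$.

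The final step is to invoke the defining property of a key polynomial: since $\epsilon(f) \geq \epsilon(Q)$, one must have $\deg(f) \geq \deg(Q)$. This contradicts the inequality $\deg(f) < \deg(Q)$ obtained in the first step. Hence no such factorization exists and $Q$ is irreducible. There is no real obstacle here since all the work was already done in establishing $\epsilon(fg)=\max\{\epsilon(f),\epsilon(g)\}$; the only thing to be mildly careful about is handling units correctly, i.e.\ remembering that the factorization must be into non-units to be nontrivial, which is what forces the strict degree drop used in the contradiction.
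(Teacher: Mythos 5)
Your proof is correct and follows exactly the same route as the paper's: factor $Q=fg$ into non-units, apply the corollary $\epsilon(fg)=\max\{\epsilon(f),\epsilon(g)\}$ to conclude one factor has $\epsilon$ equal to $\epsilon(Q)$, and contradict the defining property of a key polynomial via the strict degree drop. No issues.
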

\begin{proof}
Assume that $Q$ is a key polynomial and suppose it is not irreducible. Write $Q=fg$ where $\deg(f)<\deg(Q)$ and $\deg(g)<\deg(Q)$. Then by the previous result $\epsilon(Q)=\max\{\epsilon(f),\epsilon(g)\}$. This implies that $\epsilon(f)=\epsilon(Q)$ or $\epsilon(g)=\epsilon(Q)$. Since $\deg(f)<\deg(Q)$ and $\deg(g)<\deg(Q)$ this is a contradiction to the fact that $Q$ is a key polynomial.
\end{proof}
\begin{Obs}
Corollary \ref{proposdolivrokeypolirredu} was proved in \cite{SopivNova} (Proposition 2.4 \textbf{(ii)}). However, the proof above is simpler.
\end{Obs}

Given two polynomials $f, Q \in K[x]$ with $Q$ monic, we call $Q$-{\bf expansion of} $f$ the expression
$$ f= f_0 + f_1Q + \ldots + f_nQ^n,$$
where for each $i$, $0 \leq i \leq n$, $f_i=0$ or $\deg(f_i)<\deg(Q)$. For a monic polynomial $Q \in K[x]$, the $Q$-{\bf truncation of} $\nu$ is defined as
$$ \nu_Q(f):= \min_{0 \leq i \leq n} \{ \nu(f_iQ^i)\},$$
where $ f= f_0 + f_1Q + \ldots + f_nQ^n$ is the $Q$-expansion of $f$.

\begin{Lema}[Lemma 2.3 of \cite{SopivNova}]
Let $Q$ be a key polynomial and take $f,g\in K[x]$ such that
\[
\deg(f)<\deg(Q)\mbox{ and }\deg(g)<\deg(Q).
\]
Then for $\epsilon:=\epsilon(Q)$ and any $k\in\N$ we have the following:
\begin{description}\label{lemaonkeypollder}
\item[(i)] $\nu(\partial_k(fg))>\nu(fg)-k\epsilon$
\item[(ii)] If $\nu_Q(fQ+g)<\nu(fQ+g)$ and $k\in I(Q):=\left\{i\mid \epsilon(f)=\frac{\nu(f)-\nu(\partial_if)}{i}\right\}$, then $\nu(\partial_k(fQ+g))=\nu(fQ)-k\epsilon$;
\item[(iii)] If $h_1,\ldots,h_s$ are polynomials such that $\deg(h_i)<\deg(Q)$ for every $i=1,\ldots, s$ and
$\displaystyle\prod_{i=1}^sh_i=qQ+r$ with $\deg(r)<\deg(Q)$ and $r\neq 0$, then
\[
\nu(r)=\nu\left(\prod_{i=1}^sh_i\right)<\nu(qQ).
\]
\end{description}
\end{Lema}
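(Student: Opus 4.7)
The plan is to prove the three items in order, each leveraging the defining property of a key polynomial --- no polynomial of degree $<\deg(Q)$ attains $\epsilon$-value as large as $\epsilon(Q)$ --- and the Leibniz rule for Hasse derivatives, $\partial_k(fg)=\sum_{i+j=k}\partial_i(f)\partial_j(g)$, together with the basic inequality $\nu(\partial_i h)\geq \nu(h)-i\,\epsilon(h)$ coming straight from the definition of $\epsilon$.

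For (i), since $\deg(f),\deg(g)<\deg(Q)$, the key polynomial property gives $\epsilon(f)<\epsilon$ and $\epsilon(g)<\epsilon$. Expanding $\partial_k(fg)$ by Leibniz, every summand $\partial_i(f)\partial_{k-i}(g)$ satisfies $\nu(\partial_i(f)\partial_{k-i}(g))>\nu(fg)-k\epsilon$: if $i\geq 1$ then $\nu(\partial_i f)\geq \nu(f)-i\epsilon(f)>\nu(f)-i\epsilon$, and if $i=0$ then $k-i=k\geq 1$ and the same argument applies to $g$. The minimum over a finite family of strict lower bounds remains strict, yielding (i). For (ii), the hypothesis $\nu_Q(fQ+g)<\nu(fQ+g)$ forces $\nu(fQ)=\nu(g)$ with genuine cancellation. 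For $k\in I(Q)$, by definition $\nu(\partial_k Q)=\nu(Q)-k\epsilon$ exactly, so writing
\[
\partial_k(fQ+g)=f\,\partial_k Q+\sum_{i=1}^{k}\partial_i(f)\,\partial_{k-i}(Q)+\partial_k g,
\]
the first term has exact valuation $\nu(fQ)-k\epsilon$; every middle term is strictly larger by the argument of (i); and $\nu(\partial_k g)>\nu(g)-k\epsilon=\nu(fQ)-k\epsilon$ using $\epsilon(g)<\epsilon$ together with $\nu(g)=\nu(fQ)$. Hence $\nu(\partial_k(fQ+g))=\nu(fQ)-k\epsilon$.

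For (iii), I plan induction on $s$, with base $s=2$ handled by combining (i) and (ii). For $s=2$, assume for contradiction that $\nu(r)\geq \nu(qQ)$. Since $\deg(h_1h_2)<2\deg(Q)$ we have $\deg q<\deg Q$, so $\epsilon(q),\epsilon(r)<\epsilon$; a direct Leibniz computation of $\partial_k(qQ+r)$ at $k\in I(Q)$ shows that $q\,\partial_k Q$ dominates, forcing $\nu(\partial_k(h_1h_2))=\nu(qQ)-k\epsilon$ --- directly when $\nu(r)>\nu(qQ)$ or when $\nu(r)=\nu(qQ)$ without cancellation, and via (ii) precisely when cancellation is genuine --- and this contradicts the strict bound $\nu(\partial_k(h_1h_2))>\nu(h_1h_2)-k\epsilon\geq \nu(qQ)-k\epsilon$ supplied by (i). For the inductive step, apply the hypothesis to $\prod_{i\geq 2}h_i=q'Q+r'$ (with $r'\neq 0$, else $Q$ would divide some $h_i$ against irreducibility from Corollary \ref{proposdolivrokeypolirredu} and $\deg h_i<\deg Q$), then invoke the $s=2$ case on $h_1 r'$ to write $h_1 r'=q''Q+r$ with $\nu(r)=\nu(h_1r')<\nu(q''Q)$; combining $\nu(q'Q)>\nu(r')$ and $\nu(q''Q)>\nu(r)$ yields $\nu(qQ)>\nu(r)$ for $q=h_1q'+q''$.

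The main obstacle I expect is the case analysis in the $s=2$ base of (iii): funnelling the various configurations of cancellation in $qQ+r$ into a single contradiction, with (ii) covering exactly the case of genuine cancellation while the naive Leibniz calculation handles the others. Once this base case is secured, the inductive step is a short bookkeeping exercise, and the arguments for (i) and (ii) are routine given the tools assembled.
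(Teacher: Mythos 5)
The paper does not prove this statement: it is imported verbatim as Lemma 2.3 of \cite{SopivNova}, so there is no in-paper argument to compare against. Your reconstruction is correct and is the standard one: the Leibniz rule $\partial_k(fg)=\sum_{i+j=k}\partial_i(f)\,\partial_j(g)$ together with the two inequalities $\nu(\partial_i h)\geq \nu(h)-i\,\epsilon(h)$ (definition of $\epsilon$) and $\epsilon(h)<\epsilon(Q)$ for $\deg h<\deg Q$ (definition of key polynomial) does give (i); isolating the term $f\partial_kQ$, which has exact value $\nu(fQ)-k\epsilon$ for $k\in I(Q)$ while all other terms are strictly larger (using $\nu(g)=\nu(fQ)$, forced by the hypothesis $\nu_Q(fQ+g)<\nu(fQ+g)$), gives (ii); and your induction for (iii) closes correctly, with irreducibility of $Q$ guaranteeing $r'\neq 0$ and $r''\neq 0$. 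You also read $I(Q)$ correctly as $\{i\mid \epsilon(Q)=(\nu(Q)-\nu(\partial_iQ))/i\}$ despite the typo ($f$ for $Q$) in the displayed definition. Two small remarks. First, the three-way case split in your $s=2$ base case is unnecessary: under the single assumption $\nu(r)\geq\nu(qQ)$ the Leibniz computation already shows that $q\partial_kQ$ is the unique term of minimal value, so $\nu(\partial_k(qQ+r))=\nu(qQ)-k\epsilon$ in all subcases at once, while $\nu(qQ+r)\geq\nu(qQ)$ makes this contradict (i); you never need to invoke (ii) here. Second, you silently assume $q\neq 0$, $q'\neq 0$, $q''\neq 0$ and $h_1q'+q''\neq 0$ at various points; each degenerate case is trivial because the corresponding term has value $\infty$, but a sentence acknowledging this would make the induction airtight.
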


\begin{Prop}[Proposition 2.6 of \cite{SopivNova}]\label{proptruncakeypolval}
If $Q$ is a key polynomial, then $\nu_Q$ is a valuation of $K[x]$.
\end{Prop}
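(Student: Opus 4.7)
The plan is to verify the two defining conditions for a valuation on $K[x]$: subadditivity $\nu_Q(f+g)\geq\min\{\nu_Q(f),\nu_Q(g)\}$ and multiplicativity $\nu_Q(fg)=\nu_Q(f)+\nu_Q(g)$ (the condition $\nu_Q(0)=\infty$ is just a convention on the empty expansion). Subadditivity is essentially automatic: the $Q$-expansion of $f+g$ is obtained by adding coefficients of the $Q$-expansions of $f$ and $g$ termwise, and then for each index $i$ one has $\nu((f_i+g_i)Q^i)\geq \min\{\nu(f_iQ^i),\nu(g_iQ^i)\}$, so taking the minimum over $i$ yields the desired inequality.

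The core of the proof is multiplicativity. Writing $f=\sum_i f_iQ^i$ and $g=\sum_j g_jQ^j$, the product is $fg=\sum_{i,j}f_ig_jQ^{i+j}$. Since $\deg(f_i),\deg(g_j)<\deg(Q)$ we have $\deg(f_ig_j)<2\deg(Q)$, so we may write $f_ig_j=q_{ij}Q+r_{ij}$ with $\deg(q_{ij}),\deg(r_{ij})<\deg(Q)$. Collecting by powers of $Q$ shows that the $Q$-expansion of $fg$ has $k$-th coefficient
\[
c_k=\sum_{i+j=k}r_{ij}+\sum_{i+j=k-1}q_{ij}.
\]
The workhorse throughout is Lemma \ref{lemaonkeypollder}\textbf{(iii)} applied to the pair $h_1=f_i$, $h_2=g_j$: whenever $r_{ij}\neq 0$ it forces $\nu(r_{ij})=\nu(f_ig_j)<\nu(q_{ij}Q)$.

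For the inequality $\nu_Q(fg)\geq \nu_Q(f)+\nu_Q(g)$, I would estimate $\nu(c_kQ^k)$ from below by the minimum of the valuations of its summands. Each $\nu(r_{ij}Q^k)=\nu(f_iQ^i)+\nu(g_jQ^j)\geq \nu_Q(f)+\nu_Q(g)$, and each $\nu(q_{ij}Q^k)>\nu(f_iQ^i)+\nu(g_jQ^j)\geq \nu_Q(f)+\nu_Q(g)$ by the previous bound; the ultrametric inequality then gives the claim.

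For the reverse inequality, I would pick $i_0=\max\{i:\nu(f_iQ^i)=\nu_Q(f)\}$ and $j_0=\max\{j:\nu(g_jQ^j)=\nu_Q(g)\}$, and set $k_0=i_0+j_0$. By the maximality, the unique pair $(i,j)$ with $i+j=k_0$ at which both $\nu(f_iQ^i)=\nu_Q(f)$ and $\nu(g_jQ^j)=\nu_Q(g)$ hold simultaneously is $(i_0,j_0)$; all other pairs contributing to $c_{k_0}$ give summands of strictly larger valuation. The main obstacle is to ensure that the distinguished term $r_{i_0j_0}Q^{k_0}$ does not vanish, for otherwise the estimate collapses. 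Here I would use Corollary \ref{proposdolivrokeypolirredu} (irreducibility of $Q$): if $r_{i_0j_0}=0$ then $Q\mid f_{i_0}g_{j_0}$, forcing $Q\mid f_{i_0}$ or $Q\mid g_{j_0}$, which is incompatible with their degrees being strictly less than $\deg(Q)$. Once $r_{i_0j_0}\neq 0$, Lemma \ref{lemaonkeypollder}\textbf{(iii)} yields $\nu(r_{i_0j_0}Q^{k_0})=\nu_Q(f)+\nu_Q(g)$, all other summands in $c_{k_0}Q^{k_0}$ have strictly higher $\nu$-valuation, and the ultrametric property gives $\nu(c_{k_0}Q^{k_0})=\nu_Q(f)+\nu_Q(g)$. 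This proves $\nu_Q(fg)\leq\nu_Q(f)+\nu_Q(g)$ and completes the argument.
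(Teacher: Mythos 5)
The paper states this proposition without proof, citing Proposition 2.6 of \cite{SopivNova}; your argument is correct and is essentially the standard one from that reference: additivity from termwise addition of $Q$-expansions, and multiplicativity via the divisions $f_ig_j=q_{ij}Q+r_{ij}$, Lemma \ref{lemaonkeypollder} \textbf{(iii)}, and the choice of maximal indices $i_0,j_0$. One small point to tighten: the strict inequality $\nu(q_{ij}Q^{k_0})>\nu(f_iQ^i)+\nu(g_jQ^j)$ for the pairs with $i+j=k_0-1$ also relies on $r_{ij}\neq 0$ (a hypothesis of Lemma \ref{lemaonkeypollder} \textbf{(iii)}), so the irreducibility argument you give for the pair $(i_0,j_0)$ should be invoked for every pair with $f_ig_j\neq 0$ --- it applies verbatim, so this is an omission of wording rather than a gap.
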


\begin{Prop}[Proposition 2.10 of \cite{SopivNova}]\label{Propcompkeypol}
For two key polynomials $Q,Q'\in K[x]$ we have the following:
\begin{description}
\item[(i)] If $\deg(Q)<\deg(Q')$, then $\epsilon(Q)<\epsilon(Q')$;
\item[(ii)] If $\epsilon(Q)<\epsilon(Q')$, then $\nu_Q(Q')<\nu(Q')$;
\item[(iii)] If $\deg(Q)=\deg(Q')$, then
\begin{equation}\label{eqwhdegsame}
\nu(Q)<\nu(Q')\Llr \nu_Q(Q')<\nu(Q')\Llr \epsilon(Q)<\epsilon(Q').
\end{equation}
\end{description}
\end{Prop}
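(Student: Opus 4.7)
The plan is to prove the three items in the order \textbf{(i)}, \textbf{(iii)}, \textbf{(ii)}, since \textbf{(iii)} will be used within \textbf{(ii)}. Item \textbf{(i)} is immediate from the defining property of key polynomials: if $\epsilon(Q)\geq\epsilon(Q')$, applying the $Q'$-property to $f:=Q$ forces $\deg(Q)\geq\deg(Q')$, contradicting $\deg(Q)<\deg(Q')$.

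For \textbf{(iii)}, I set $r:=Q'-Q$, which is either $0$ (making the statements trivial) or has $\deg(r)<\deg(Q)$ since $Q,Q'$ are monic of the same degree. Then $Q'=r+1\cdot Q$ is the $Q$-expansion of $Q'$, so $\nu_Q(Q')=\min\{\nu(r),\nu(Q)\}$ while $\nu(Q')=\nu(Q+r)$. A direct three-case split on $\nu(r)$ versus $\nu(Q)$ shows that both $\nu(Q)<\nu(Q')$ and $\nu_Q(Q')<\nu(Q')$ are equivalent to $\nu(r)=\nu(Q)$ together with the cancellation $\nu(Q+r)>\nu(Q)$, giving the first equivalence. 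For the equivalence with $\epsilon$, the key input is $\epsilon(r)<\epsilon(Q)$ (from $\deg(r)<\deg(Q)$ and the key polynomial property of $Q$), which implies the strict bound $\nu(\partial_kr)>\nu(r)-k\epsilon(Q)$ for all $k\geq 1$. Combining this with $\partial_kQ'=\partial_kQ+\partial_kr$ and the definitional bound $\nu(\partial_kQ)\geq\nu(Q)-k\epsilon(Q)$, evaluation at an index $k^*$ realizing $\epsilon(Q)$ yields $\nu(Q)<\nu(Q')\Rightarrow\epsilon(Q)<\epsilon(Q')$. The converse splits $\nu(Q)\geq\nu(Q')$ into $\nu(Q)>\nu(Q')$ (symmetric, reducing to the forward direction with $Q$ and $Q'$ swapped) and $\nu(Q)=\nu(Q')$ (a uniform termwise bound $\nu(\partial_kQ')\geq\nu(Q')-k\epsilon(Q)$ valid for every $k\geq 1$ forces $\epsilon(Q')\leq\epsilon(Q)$).

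For \textbf{(ii)}, the contrapositive of \textbf{(i)} forces $\deg(Q)\leq\deg(Q')$. The case $\deg(Q)=\deg(Q')$ is part of \textbf{(iii)}. For $\deg(Q)<\deg(Q')$ I would argue by contradiction: suppose $\nu_Q(Q')=\nu(Q')$ and write the $Q$-expansion $Q'=\sum_{i=0}^{m}f_iQ^i$ with each $\deg(f_i)<\deg(Q)$, so $\epsilon(f_i)<\epsilon(Q)$. Choose $k\geq 1$ realizing $\epsilon(Q')$. Expanding each $\partial_k(f_iQ^i)$ via the Hasse--Leibniz rule, and further expanding $\partial_l(Q^i)$ as a sum of products $\prod_{t=1}^{i}\partial_{s_t}Q$ with $\sum_{t}s_t=l$, the bounds $\nu(\partial_sQ)\geq\nu(Q)-s\epsilon(Q)$ and $\nu(\partial_jf_i)\geq\nu(f_i)-j\epsilon(Q)$ give the termwise estimate $\nu(\partial_k(f_iQ^i))\geq\nu(f_iQ^i)-k\epsilon(Q)$. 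The no-cancellation hypothesis $\nu_Q(Q')=\nu(Q')$ then yields $\nu(\partial_kQ')\geq\nu(Q')-k\epsilon(Q)$, forcing $\epsilon(Q')\leq\epsilon(Q)$, a contradiction.

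The main obstacle is the converse of the $\epsilon$-equivalence in \textbf{(iii)} in the sub-case $\nu(Q)=\nu(Q')$: unlike the forward direction, this cannot be dispatched by picking a single clever index $k^*$ and instead requires controlling $\nu(\partial_kQ')$ uniformly in $k$. A secondary concern in \textbf{(ii)} is correctly valuating the Hasse--Leibniz expansion of $\partial_l(Q^i)$, which involves summing over partitions of $l$ and demands that each summand be bounded using nothing beyond the definitional inequality $\nu(\partial_sQ)\geq\nu(Q)-s\epsilon(Q)$.
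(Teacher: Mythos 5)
Your proposal is correct, but there is nothing in this paper to compare it against: the statement is imported verbatim as Proposition~2.10 of \cite{SopivNova} and no proof is given here. Your reconstruction is a valid, self-contained derivation from the definitions, and it follows the same general philosophy as the source (and as the quoted Lemma~\ref{lemaonkeypollder}): control $\nu(\partial_k(\cdot))$ termwise via the Hasse--Leibniz rule, using $\nu(\partial_s Q)\geq\nu(Q)-s\epsilon(Q)$ for the factor $Q$ and the strict bound $\nu(\partial_j h)>\nu(h)-j\epsilon(Q)$ for any $h$ with $\deg(h)<\deg(Q)$, the latter coming from the key-polynomial property. All the steps check out: in \textbf{(iii)} the three-case split on $\nu(r)$ versus $\nu(Q)$ does give the first equivalence, the choice of $k^*$ realizing $\epsilon(Q)$ correctly forces $\nu(\partial_{k^*}Q')=\nu(\partial_{k^*}Q)$ and hence $\epsilon(Q')>\epsilon(Q)$ in the forward direction, and in the sub-case $\nu(Q)=\nu(Q')$ the uniform bound works --- though you should make explicit that $\nu(r)\geq\nu(Q')$ there (it follows since $\nu(r)<\nu(Q)$ would force $\nu(Q')=\nu(r)<\nu(Q)$). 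Two small economies are available: your argument for \textbf{(ii)} in the case $\deg(Q)<\deg(Q')$ applies verbatim when $\deg(Q)=\deg(Q')$ (the $Q$-expansion is then $f_0+1\cdot Q$), so \textbf{(ii)} needs no appeal to \textbf{(iii)} at all; and once \textbf{(ii)} is proved this way, the implication $\epsilon(Q)<\epsilon(Q')\Rightarrow\nu_Q(Q')<\nu(Q')$ in \textbf{(iii)} comes for free, shortening the chain of equivalences you must verify by hand.
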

\begin{Cor}\label{corsobreequaldepsoidkey}
Let $Q$ and $Q'$ be key polynomials such that $\epsilon(Q)\leq\epsilon(Q')$. For every $f\in K[x]$, if $\nu_Q(f)=\nu(f)$, then $\nu_{Q'}(f)=\nu(f)$.
\end{Cor}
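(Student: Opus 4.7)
The plan is to prove the inequality $\nu_{Q'}(f)\geq\nu(f)$, since the reverse inequality $\nu_{Q'}(f)\leq\nu(f)$ always holds (apply the triangle inequality to the $Q'$-expansion of $f$). To get the forward inequality I will push the $Q$-expansion $f=\sum_i f_i Q^i$ through $\nu_{Q'}$, using that $\nu_{Q'}$ is a genuine valuation by Proposition \ref{proptruncakeypolval}.

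The first preliminary step is to observe that Proposition \ref{Propcompkeypol}\textbf{(i)}, applied with the roles of $Q$ and $Q'$ swapped, forces $\deg(Q)\leq\deg(Q')$, because $\deg(Q')<\deg(Q)$ would give $\epsilon(Q')<\epsilon(Q)$, contradicting the hypothesis. Two auxiliary equalities are then needed: (a) $\nu_{Q'}(f_i)=\nu(f_i)$ for every coefficient $f_i$ in the $Q$-expansion of $f$, and (b) $\nu_{Q'}(Q)=\nu(Q)$. Equality (a) is immediate, since $\deg(f_i)<\deg(Q)\leq\deg(Q')$ makes $f_i$ its own $Q'$-expansion. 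For (b), the case $\deg(Q)<\deg(Q')$ is identical. The delicate case is $\deg(Q)=\deg(Q')$: here I invoke Proposition \ref{Propcompkeypol}\textbf{(iii)} with $Q$ and $Q'$ swapped, which yields $\nu_{Q'}(Q)<\nu(Q)\Llr\epsilon(Q')<\epsilon(Q)$; the right-hand side fails by hypothesis, so combined with the automatic inequality $\nu_{Q'}(Q)\leq\nu(Q)$ we get equality.

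With (a) and (b) in place, the valuation property of $\nu_{Q'}$ gives
\[
\nu_{Q'}(f)=\nu_{Q'}\!\left(\sum_i f_iQ^i\right)\geq \min_i\bigl(\nu_{Q'}(f_i)+i\,\nu_{Q'}(Q)\bigr)=\min_i\bigl(\nu(f_i)+i\,\nu(Q)\bigr)=\nu_Q(f)=\nu(f),
\]
where the last equality is the hypothesis. Combined with $\nu_{Q'}(f)\leq\nu(f)$, this proves the claim. The only non-routine point is the equal-degree instance of (b), where the symmetry of the statement of Proposition \ref{Propcompkeypol}\textbf{(iii)} in $Q$ and $Q'$ is essential; everything else is bookkeeping once one has committed to pushing the $Q$-expansion through $\nu_{Q'}$.
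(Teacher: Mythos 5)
Your proposal is correct and follows essentially the same route as the paper: establish $\deg(Q)\leq\deg(Q')$, show $\nu_{Q'}(f_i)=\nu(f_i)$ and $\nu_{Q'}(Q)=\nu(Q)$, and push the $Q$-expansion through the valuation $\nu_{Q'}$. The only difference is that you spell out the case analysis behind $\nu_{Q'}(Q)=\nu(Q)$ (which the paper compresses into ``it follows from Proposition \ref{Propcompkeypol}''), and your treatment of the equal-degree case via part \textbf{(iii)} is exactly the right justification.
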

\begin{proof}
It follows from Proposition \ref{Propcompkeypol} that if $\epsilon(Q)\leq\epsilon(Q')$, then $\nu_{Q'}(Q)=\nu(Q)$. Since $\deg(Q)\leq \deg(Q')$, for every $f_i\in K[x]$ with $\deg(f_i)<\deg(Q)$ we have $\nu_{Q'}(f_i)=\nu(f_i)$. Hence $\nu_{Q'}(f_iQ^i)=\nu(f_iQ^i)$.

Take $f\in K[x]$ such that $\nu_Q(f)=\nu(f)$ and let
\[
f=f_0+f_1 Q+\ldots+f_nQ^n
\]
be the $Q$-expansion of $f$. Then
\[
\nu_{Q'}(f)\geq\min_{0\leq i\leq n}\{\nu_{Q'}(f_iQ^i)\}=\min_{0\leq i\leq n}\{\nu(f_iQ^i)\}=\nu_Q(f)=\nu(f).
\]
Since $\nu_{Q'}(f)\leq \nu(f)$ for every $f\in K[x]$ we have our result.
\end{proof}

\begin{Def}\label{definitioncomplsetes}
A set $\textbf{Q}\subseteq K[x]$ is called a \textbf{complete set for $\nu$} if for every $f\in K[x]$ there exists $Q\in \textbf{Q}$ with $\deg(Q)\leq\deg(f)$ such that $\nu_Q(f)=\nu(f)$. If the set \textbf{Q} admits an order under which it is well-ordered, then it is called a complete sequence.
\end{Def}

\begin{Teo}[Theorem 1.1 of \cite{SopivNova}]\label{Theoremexistencecompleteseqkpol}
Every valuation $\nu$ on $K[x]$ admits a complete set $\textbf{Q}$ of key polynomials. Moreover, $\textbf{Q}$ can be chosen to be well-ordered with respect to the order given by $Q<Q'$ if $\epsilon(Q)<\epsilon(Q')$.
\end{Teo}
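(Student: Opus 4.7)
I would prove the theorem by transfinite recursion, constructing a strictly $\epsilon$-increasing well-ordered chain of key polynomials and stopping as soon as completeness is reached. Take as $Q_0$ any monic linear polynomial: since there is no non-constant polynomial of degree $<1$, the key polynomial condition is vacuous at level $\epsilon(Q_0)$. Having constructed $\textbf{Q}_{<\alpha}=\{Q_\beta\}_{\beta<\alpha}$ consisting of key polynomials with strictly increasing $\epsilon$-values, I check whether $\textbf{Q}_{<\alpha}$ is complete in the sense of Definition \ref{definitioncomplsetes}. If so, set $\textbf{Q}:=\textbf{Q}_{<\alpha}$. Otherwise the set of \emph{bad} polynomials
\[
\mathcal{B}_\alpha=\{f\in K[x]\mid \nu_Q(f)<\nu(f)\text{ for every }Q\in\textbf{Q}_{<\alpha}\text{ with }\deg Q\leq\deg f\}
\]
is nonempty. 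Let $d_\alpha=\min\{\deg f\mid f\in\mathcal{B}_\alpha\}$ and pick a monic $Q_\alpha\in\mathcal{B}_\alpha$ of degree $d_\alpha$ with $\epsilon(Q_\alpha)$ as small as possible among such minimal-degree elements.

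The crux is to verify two properties of $Q_\alpha$: that it is a key polynomial, and that $\epsilon(Q_\alpha)>\epsilon(Q_\beta)$ for every $\beta<\alpha$. The second is the easier: since $Q_\alpha\in\mathcal{B}_\alpha$, for each $\beta<\alpha$ with $\deg Q_\beta\leq d_\alpha$ we have $\nu_{Q_\beta}(Q_\alpha)<\nu(Q_\alpha)$, which by Proposition \ref{Propcompkeypol} forces $\epsilon(Q_\beta)<\epsilon(Q_\alpha)$ (the case $\deg Q_\beta<d_\alpha$ via part \textbf{(i)} strengthened through the strict truncation inequality, the case of equal degrees directly from \textbf{(iii)}).

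For the key polynomial property I argue by contradiction. Suppose $g\in K[x]$ satisfies $\deg g<d_\alpha$ and $\epsilon(g)\geq \epsilon(Q_\alpha)$. By minimality of $d_\alpha$, $g\notin\mathcal{B}_\alpha$, so there is some $Q_\beta\in\textbf{Q}_{<\alpha}$ with $\deg Q_\beta\leq \deg g$ and $\nu_{Q_\beta}(g)=\nu(g)$. Using $\epsilon(Q_\beta)<\epsilon(Q_\alpha)\leq\epsilon(g)$, Proposition \ref{Propcompkeypol}, and Corollary \ref{corsobreequaldepsoidkey}, the chain of $\epsilon$-comparisons can be transported across truncations; combined with Lemma \ref{lemaonkeypollder} (which controls $\nu(\partial_k \cdot)$ for $Q$-expansions) this contradicts either the minimality of $\epsilon(Q_\alpha)$ among degree-$d_\alpha$ bad polynomials or the fact that $Q_\alpha\in\mathcal{B}_\alpha$ itself. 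This is the main obstacle of the proof: one must translate the abstract inequality $\epsilon(g)\geq\epsilon(Q_\alpha)$ into a concrete statement about truncations that collides with the defining property of $\mathcal{B}_\alpha$.

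Finally, termination follows from a cardinality argument: the $Q_\alpha$'s are pairwise distinct because their $\epsilon$-values are strictly increasing, and all lie in the set $K[x]$, so the recursion must halt at some ordinal $\theta\leq|K[x]|^+$, at which point $\textbf{Q}:=\textbf{Q}_{<\theta}$ is complete and well-ordered by $\epsilon$, as required.
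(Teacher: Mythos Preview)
This theorem is not proved in the present paper: it is imported from \cite{SopivNova} (as the label ``Theorem~1.1 of \cite{SopivNova}'' indicates) and stated without proof, followed only by a remark about the degree condition in Definition~\ref{definitioncomplsetes}. There is therefore no proof here to compare your proposal against.

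On the substance of your sketch, the transfinite-recursion architecture is close in spirit to the construction in \cite{SopivNova}, but there are genuine gaps. First, you choose $Q_\alpha$ with $\epsilon(Q_\alpha)$ \emph{minimal} among monic degree-$d_\alpha$ elements of $\mathcal B_\alpha$; since $\epsilon$ takes values in the divisible ordered group $\Gamma'=\Gamma\otimes\Q$, there is no reason such a minimum should exist, and you later rely on precisely this minimality in the key-polynomial verification. Second, the verification that $Q_\alpha$ is a key polynomial is only gestured at (``the chain of $\epsilon$-comparisons can be transported across truncations \ldots\ this contradicts either the minimality \ldots\ or the fact that $Q_\alpha\in\mathcal B_\alpha$''); this is exactly where the work lies, and the intended contradiction is not identified. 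A smaller but related issue: to deduce $\epsilon(Q_\beta)<\epsilon(Q_\alpha)$ from $\nu_{Q_\beta}(Q_\alpha)<\nu(Q_\alpha)$ you invoke Proposition~\ref{Propcompkeypol}, but that proposition, as stated here, is formulated for a pair of key polynomials, whereas $Q_\alpha$ has not yet been shown to be one; you would need the one-sided version of this implication (valid when only $Q_\beta$ is assumed to be a key polynomial), which is proved in \cite{SopivNova} but not quoted in the present paper.
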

\begin{Obs}
In \cite{SopivNova}, the definition of complete sequence does not require that $\deg(Q)\leq\deg(f)$ as in Definition \ref{definitioncomplsetes} above. This property is important and the proof of Theorem 1.1 in \cite{SopivNova} guarantees that the obtained sequence satisfies the additional property.
\end{Obs}

\section{Generating sequences vs key polynomials}\label{secsequnckeypolgen}
In this section we will discuss the relation between key polynomials and generating sequences. We start with the following easy lemma.
\begin{Lema}\label{keypolnotinterscenters}
A valuation $\nu$ on $K[x]$ is centered if and only if $\nu(a)=0$ for every $a\in K\setminus\{0\}$ and $\nu(x)\geq 0$. 
\end{Lema}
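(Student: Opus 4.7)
The statement is a simple characterization, so the plan is correspondingly short. The forward direction will follow almost immediately from the definition of "centered," while the converse will use the fact that every element of $K[x]$ is a finite $K$-linear combination of powers of $x$, together with the ultrametric inequality.

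For the forward direction, I would assume $\nu$ is centered, so $\nu(f)\geq 0$ for every $f\in K[x]$. In particular $\nu(x)\geq 0$. For $a\in K\setminus\{0\}$, both $a$ and $a^{-1}$ lie in $K\subseteq K[x]$, so $\nu(a)\geq 0$ and $\nu(a^{-1})\geq 0$. From $\nu(a)+\nu(a^{-1})=\nu(1)=0$ I conclude $\nu(a)=0$.

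For the converse, assume $\nu(a)=0$ for all $a\in K^\times$ and $\nu(x)\geq 0$. Given any $f\in K[x]$, write $f=\sum_{i=0}^n a_i x^i$. For each $i$ with $a_i\neq 0$ we have
\[
\nu(a_i x^i)=\nu(a_i)+i\,\nu(x)=i\,\nu(x)\geq 0,
\]
and the ultrametric inequality gives $\nu(f)\geq \min_i\nu(a_ix^i)\geq 0$. Hence $\nu$ is centered.

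There is no real obstacle here; the only thing to be mindful of is remembering that $K\subseteq K[x]$ so the hypotheses on $K^\times$ and $x$ really do determine $\nu$ on all of $K[x]$ via the triangle inequality, and that the equality $\nu(a)=0$ (not just $\nu(a)\geq 0$) in the forward direction requires invoking $\nu(a^{-1})$ as well.
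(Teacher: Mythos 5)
Your proof is correct and follows essentially the same route as the paper: the forward direction uses $\nu(a)+\nu(a^{-1})=0$ (the paper phrases this as a contradiction from $\nu(a^{-1})<0$, but it is the same observation), and the converse uses the $K$-linear expansion of $f$ in powers of $x$ together with the ultrametric inequality. No gaps.
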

\begin{proof}
Assume that $\nu$ is centered. In particular, $\nu(x)\geq 0$. Moreover, since $\nu$ is centered, $\nu(a)\geq 0$ for every $a\in K\setminus\{0\}$. If $\nu(a)>0$, then
\[
\nu(a^{-1})=-\nu(a)<0,
\]
which is a contradiction. Hence, $\nu(a)=0$.

For the converse, assume that $\nu(x)\geq 0$ and $\nu(a)=0$ for every $a\in K\setminus\{0\}$. For every $p(x)=a_0+\ldots+a_nx^n\in K[x]$ we have
\[
\nu(p(x))\geq \min\{\nu(a_ix^i)\}\geq 0.
\]
Hence, $\nu$ is centered.
\end{proof}

In order to prove Theorem \ref{theormjepolgensequ}, we will need a few results. Our first result shows that any complete set (independently of being formed by key polynomials) satisfies \textbf{(GS1$^*$)}.
\begin{Prop}\label{propsobrecomp}
If $\textbf Q\subseteq K[x]$ is a complete set for $\nu$, then \textbf{Q} satisfies \textbf{(GS1$^*$)}.
\end{Prop}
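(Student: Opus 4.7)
The plan is to prove the statement by strong induction on $\deg(f)$ (with the convention $\deg(0) = -\infty$). The base case $\deg(f) \leq 0$ is immediate: if $f = 0$ take the empty sum, and if $f = a \in K^\times$ take $r = 1$, $a_1 = a$, and $\lambda_1 \equiv 0$, so that $\textbf{Q}^{\lambda_1} = 1$, the valuation condition holds trivially, and the degree condition on the support of $\lambda_1$ is vacuous.

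For the inductive step assume $\deg(f) \geq 1$. Apply the definition of complete set to obtain $Q \in \textbf{Q}$ with $\deg(Q) \leq \deg(f)$ and $\nu_Q(f) = \nu(f)$. Let $f = f_0 + f_1 Q + \cdots + f_m Q^m$ be the $Q$-expansion, so $\deg(f_i) < \deg(Q)$ for each $i$. By the very definition of $\nu_Q$, the equality $\nu_Q(f) = \nu(f)$ forces $\nu(f_i Q^i) \geq \nu(f)$ for every $i$ with $f_i \neq 0$. Since $\deg(f_i) < \deg(Q) \leq \deg(f)$, the induction hypothesis expands each nonzero $f_i$ as
\[
f_i = \sum_{j=1}^{r_i} a_{i,j}\textbf{Q}^{\lambda_{i,j}}, \qquad \nu\!\left(a_{i,j}\textbf{Q}^{\lambda_{i,j}}\right) \geq \nu(f_i),
\]
with $\deg(Q') \leq \deg(f_i)$ whenever $\lambda_{i,j}(Q') \neq 0$.

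The merging step is to define $\lambda'_{i,j} \in \N_0^{\textbf{Q}}$ by $\lambda'_{i,j}(Q) := \lambda_{i,j}(Q) + i$ and $\lambda'_{i,j}(Q') := \lambda_{i,j}(Q')$ for $Q' \neq Q$. Then $f = \sum_{i,j} a_{i,j}\textbf{Q}^{\lambda'_{i,j}}$, and the valuation bound follows by additivity:
\[
\nu\!\left(a_{i,j}\textbf{Q}^{\lambda'_{i,j}}\right) = \nu\!\left(a_{i,j}\textbf{Q}^{\lambda_{i,j}}\right) + i\,\nu(Q) \geq \nu(f_i) + i\,\nu(Q) = \nu(f_i Q^i) \geq \nu(f).
\]

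The only part requiring slight care — and the main (minor) obstacle — is the degree condition in \textbf{(GS1$^*$)}: for any $Q' \in \textbf{Q}$ with $\lambda'_{i,j}(Q') \neq 0$, either $Q' = Q$, in which case $\deg(Q') = \deg(Q) \leq \deg(f)$ by completeness, or $Q' \neq Q$, in which case $\lambda_{i,j}(Q') \neq 0$ and the induction hypothesis applied to $f_i$ gives $\deg(Q') \leq \deg(f_i) < \deg(Q) \leq \deg(f)$. Either way $\deg(Q') \leq \deg(f)$, completing the induction. Note that no property of key polynomials is needed — only the completeness hypothesis and the definition of the $Q$-truncation.
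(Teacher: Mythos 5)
Your proof is correct and follows essentially the same route as the paper: induction on $\deg(f)$, using completeness to pick $Q$ with $\deg(Q)\leq\deg(f)$ and $\nu_Q(f)=\nu(f)$, expanding the coefficients of the $Q$-expansion by the induction hypothesis, and merging via $\lambda'_{i,j}(Q)=\lambda_{i,j}(Q)+i$. The only (harmless) difference is that you start the induction at constants and absorb the degree-one case into the general step, whereas the paper treats $\deg(f)=1$ as the explicit base case; your version is if anything slightly cleaner, and your verification of the degree condition on the support of $\lambda'_{i,j}$ is exactly the point the paper also checks.
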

\begin{proof}
We will prove by induction on the degree of $f$. If $\deg(f)=1$, then $f=x-a$ for some $a\in K$. By our assumption, there exists $x-b\in \textbf{Q}$ such that
\[
\beta:=\nu(x-a)=\nu_{x-b}(x-a)=\min\{\nu(x-b),\nu(b-a)\}.
\]
This implies that $\nu(x-b)\geq \beta$, $\nu(b-a)\geq \beta$ and that $f=(x-b)+(b-a)$, which is what we wanted to prove.

Assume now that for $k\in\N$, for every $f\in K[x]$ of $\deg(f)<k$ our result is satisfied. Let $f$ be a polynomial of degree $k$. Since $\textbf{Q}$ is a complete set for $\nu$, there exists $q\in \textbf{Q}$ such that $\deg(q)\leq \deg(f)$ and $\nu_q(f)=\nu(f)$. Let
\[
f=f_0+f_1q+\ldots+f_sq^s
\]
be the $q$-expansion of $f$. Since $\deg(q)\leq \deg(f)$, we have $\deg(f_i)<\deg(f)=k$ for every $i$, $1\leq i\leq s$. By the induction hypothesis, there exist
\[
a_{11},\ldots, a_{1r_1},\ldots,a_{s1},\ldots,a_{sr_s}\in K\mbox{ and }\lambda_{11},\ldots, \lambda_{1r_1},\ldots,\lambda_{s1},\ldots,\lambda_{sr_s}\in \N_0^\textbf{Q},
\]
such that for every $i$, $0\leq i\leq s$,
\[
f_i=\sum_{j=1}^{r_i}a_{ij}\textbf{Q}^{\lambda_{ij}}\mbox{ with }\nu\left(a_{ij}\textbf{Q}^{\lambda_{ij}}\right)\geq \nu(f_i)\mbox{ for every }j, 1\leq j\leq r_i,
\]
and $\deg(Q)\leq \deg(f_i)\leq \deg(f)$ for every polynomial $Q$ for which $\lambda_{ij}(Q)\neq 0$ for some $i,j$. This implies that
\[
f=\sum_{i=0}^s\left(\sum_{j=1}^{r_i}a_{ij}\textbf{Q}^{\lambda_{ij}}\right)q^i=\sum_{0\leq i\leq s, 1\leq j\leq r_i}a_{ij}\textbf Q^{\lambda_{ij}'},
\]
where
\begin{displaymath}
\lambda'_{ij}(q')=\left\{
\begin{array}{ll}
\lambda_{ij}(q')+i&\mbox{ if }q'=q\\
\lambda_{ij}(q')&\mbox{ if }q'\neq q
\end{array}
\right..
\end{displaymath}
Moreover, since $\nu_q(f)=\displaystyle\min_{0\leq i\leq s}\{\nu(f_iq^i)\}=\nu(f)$ and
\[
\nu\left(a_{ij}\textbf{Q}^{\lambda_{ij}}\right)\geq \nu(f_i),\mbox{ for every }i, 0\leq i\leq n\mbox{ and }j, 1\leq j\leq r_i,
\]
we have
\[
\nu(f)\leq \nu(f_i)+i\nu(q)\leq \nu\left(a_{ij}\textbf{Q}^{\lambda_{ij}}\right)+i\nu(q)=\nu\left(a_{ij}\textbf{Q}^{\lambda'_{ij}}\right),
\]
for every $i$, $0\leq i\leq s$ and $j$, $1\leq j\leq r_i$, which is what we wanted to prove.
\end{proof}

The next result gives a converse for Proposition \ref{propsobrecomp}.

\begin{Prop}\label{propogs1estrela}
Assume that $\textbf{Q}$ is a subset of $K[x]$ with the following properties:
\begin{itemize}
\item $\nu_Q$ is a valuation for every $Q\in\textbf{Q}$;
\item for every finite subset $\mathcal F\subseteq \textbf{Q}$, there exists $Q'\in \mathcal F$ such that $\nu_{Q'}(Q)=\nu(Q)$ for every $Q\in\mathcal{F}$;
\item \textbf{(GS1$^*$)} is satisfied.
\end{itemize}
Then $\textbf Q$ is a complete set for $\nu$.
\end{Prop}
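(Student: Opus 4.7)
The approach is to take an arbitrary $f\in K[x]$, use \textbf{(GS1$^*$)} to expand it in terms of $\textbf{Q}$, and then locate a single $Q'\in\textbf{Q}$ of small enough degree on which $\nu_{Q'}$ sees the full valuation of $f$. Concretely, \textbf{(GS1$^*$)} yields a representation $f=\sum_{i=1}^r a_i\textbf{Q}^{\lambda_i}$ with $a_i\in K$, $\nu(a_i\textbf{Q}^{\lambda_i})\geq\nu(f)$, and $\deg(Q)\leq\deg(f)$ whenever $\lambda_i(Q)\neq 0$. Let $\mathcal{F}\subseteq\textbf{Q}$ be the finite set of all such $Q$; every element of $\mathcal{F}$ has degree at most $\deg(f)$. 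Applying the second hypothesis to $\mathcal{F}$ produces $Q'\in\mathcal{F}$ with $\nu_{Q'}(Q)=\nu(Q)$ for every $Q\in\mathcal{F}$, and since $Q'\in\mathcal{F}$ we already get $\deg(Q')\leq\deg(f)$ for free.

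It then remains to verify $\nu_{Q'}(f)=\nu(f)$. The inequality $\nu_{Q'}(f)\leq\nu(f)$ always holds: writing the $Q'$-expansion $f=f_0+f_1Q'+\cdots+f_n(Q')^n$, the triangle inequality for $\nu$ gives $\nu(f)\geq\min_i\nu(f_i(Q')^i)=\nu_{Q'}(f)$. For the reverse inequality I use that $\nu_{Q'}$ is a valuation (first hypothesis), that it agrees with $\nu$ on each $Q\in\mathcal{F}$ by the choice of $Q'$, and that it agrees with $\nu$ on every constant $a_i\in K$ because the $Q'$-expansion of a constant is the constant itself (as $\deg(Q')\geq 1$). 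Multiplicativity of $\nu_{Q'}$ then gives $\nu_{Q'}(a_i\textbf{Q}^{\lambda_i})=\nu(a_i\textbf{Q}^{\lambda_i})\geq\nu(f)$ for each $i$, and a final application of the triangle inequality for $\nu_{Q'}$ yields $\nu_{Q'}(f)\geq\nu(f)$.

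The only minor subtlety is the degenerate case $\mathcal{F}=\emptyset$, which corresponds to $f\in K$: here any $Q\in\textbf{Q}$ satisfies $\nu_Q(f)=\nu(f)$ trivially since $f$ is its own $Q$-expansion, so the conclusion is immediate once $\textbf{Q}$ is nonempty, and the degree constraint is vacuous under the conventions in force elsewhere in the paper. I do not anticipate any serious obstacle in the argument. The essential point, which is really the heart of the proposition, is that hypothesis two produces a single $Q'$ that simultaneously ``sees'' every factor appearing in the \textbf{(GS1$^*$)} expansion of $f$, which is precisely what allows $\nu_{Q'}$ to be evaluated term by term on that expansion just as $\nu$ is.
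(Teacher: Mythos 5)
Your proof is correct and follows essentially the same route as the paper's: expand $f$ via \textbf{(GS1$^*$)}, collect the finitely many $Q$'s that occur into $\mathcal F$, pick $Q'$ via the second hypothesis, and squeeze $\nu_{Q'}(f)$ between $\nu(f)$ and $\min_i\nu_{Q'}(a_i\textbf{Q}^{\lambda_i})$. Your extra care about the degree bound on $Q'$ and the degenerate case $f\in K$ is a small but welcome addition that the paper's own proof leaves implicit.
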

\begin{proof}
Take any polynomial $f\in K[x]$ and let $\beta:=\nu(f)$. Then, there exist $a_1,\ldots,a_r\in K$ and $\lambda_1,\ldots,\lambda_r\in\N_0^\textbf{Q}$ such that 
\[
f=\sum_{i=1}^ra_i\textbf{Q}^{\lambda_i}\mbox{ with }\nu\left(a_i\textbf{Q}^{\lambda_i}\right)\geq \beta,\mbox{ for every }i, 1\leq i\leq r,
\]
and $\deg(Q)\leq\deg(f)$ for every $Q\in \textbf{Q}$ for which $\lambda_i(Q)\neq 0$ for some $i$, $1\leq i\leq r$.
Let
\[
\mathcal F:=\{Q\in \textbf{Q}\mid \lambda_i(Q)\neq 0\mbox{ for some }i,1\leq i\leq n\}.
\]
Since $\mathcal F$ is finite, there exists $Q'\in\mathcal F$ such that $\nu_{Q'}(Q)=\nu(Q)$ for every $Q\in \mathcal F$. In particular, $\nu\left(a_i\textbf{Q}^{\lambda_i}\right)=\nu_{Q'}\left(a_i\textbf{Q}^{\lambda_i}\right)$ for every $i$, $1\leq i\leq r$. Then
\[
\beta\leq \min_{1\leq i\leq n}\left\{\nu\left(a_i\textbf Q^{\lambda_i}\right)\right\}=\min_{1\leq i\leq n}\left\{\nu_{Q'}\left(a_i\textbf Q^{\lambda_i}\right)\right\}\leq \nu_{Q'}(f)\leq \nu(f)=\beta.
\]
Therefore, $\nu_Q(f)=\nu(f)$ and this concludes the proof.
\end{proof}

\begin{proof}[Proof of Theorem \ref{theormjepolgensequ}]
If $\textbf{Q}$ is a complete set for $\nu$, then by Proposition \ref{propsobrecomp} \textbf{Q} satisfies \textbf{(GS1$^*$)}.

To prove the converse, we observe that since every element $Q$ in \textbf{Q} is a key polynomial, by Proposition \ref{proptruncakeypolval}, we have that $\nu_Q$ is a valuation. Moreover, since \textbf{Q} is ordered by $Q<Q'$ if and only if $\epsilon(Q)<\epsilon(Q')$, for every finite set $\mathcal F$ we can choose $Q'\in\mathcal F$ such that $\epsilon(Q)\leq\epsilon(Q')$ for every $Q\in\mathcal F$. Applying Corollary \ref{corsobreequaldepsoidkey} we obtain that
\[
\nu_{Q'}(Q)=\nu(Q)\mbox{ for every }Q\in \mathcal F.
\]
The result now follows from Proposition \ref{propogs1estrela}.
\end{proof}

An interesting consequence of Theorem \ref{theormjepolgensequ} is the following.

\begin{Cor}
For every valuation $\nu$ on $K[x]$, there exists a set of key polynomials $\textbf{Q}\subseteq K[x]$ such that \textbf{Q} satisfies \textbf{(GS1*)}. Moreover, this set can be chosen to be well-ordered with respect to the order $Q<Q'$ if $\epsilon(Q)<\epsilon(Q')$.
\end{Cor}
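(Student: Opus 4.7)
The plan is to derive this corollary as an immediate combination of the two preceding theorems cited in the paper, so there is essentially no new content to prove. First I would apply Theorem \ref{Theoremexistencecompleteseqkpol} to the given valuation $\nu$ on $K[x]$: this produces a complete set $\textbf{Q}$ of key polynomials that is well-ordered under the order $Q<Q'$ iff $\epsilon(Q)<\epsilon(Q')$. This takes care of the ``key polynomial'' and ``well-ordered'' parts of the conclusion in one stroke.

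Next, I would invoke Theorem \ref{theormjepolgensequ}, which asserts that for a set of key polynomials in $K[x]$, being complete is equivalent to satisfying \textbf{(GS1$^*$)}. Applying the forward direction of that equivalence to the set $\textbf{Q}$ obtained in the previous step yields that $\textbf{Q}$ satisfies \textbf{(GS1$^*$)}, which is exactly what the corollary asks for.

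Since both ingredients are already proved in the paper, there is no real obstacle. The only thing worth being careful about is that the well-ordering property supplied by Theorem \ref{Theoremexistencecompleteseqkpol} is precisely the one mentioned in the corollary (i.e., by the $\epsilon$-invariant), so the two statements match without any modification. A one-sentence proof of the form ``By Theorem \ref{Theoremexistencecompleteseqkpol} there exists a well-ordered (by $\epsilon$) complete set $\textbf{Q}$ of key polynomials for $\nu$; by the forward direction of Theorem \ref{theormjepolgensequ}, $\textbf{Q}$ satisfies \textbf{(GS1$^*$)}'' will suffice.
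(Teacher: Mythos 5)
Your proposal is correct and follows exactly the paper's own argument: the paper also proves this corollary by combining Theorem \ref{Theoremexistencecompleteseqkpol} (existence of a complete, $\epsilon$-well-ordered set of key polynomials) with the forward direction of Theorem \ref{theormjepolgensequ}. Nothing is missing.
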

\begin{proof}
It follows directly from Theorems \ref{Theoremexistencecompleteseqkpol} and \ref{theormjepolgensequ}.
\end{proof}

\noindent{\footnotesize MATHEUS DOS SANTOS BARNAB\'E\\
Departamento de Matem\'atica--UFSCar\\
Rodovia Washington Lu\'is, 235\\
13565-905 - S\~ao Carlos - SP\\
Email: {\tt matheusbarnabe@dm.ufscar.br} \\\\

\noindent{\footnotesize JOSNEI NOVACOSKI\\
Departamento de Matem\'atica--UFSCar\\
Rodovia Washington Lu\'is, 235\\
13565-905 - S\~ao Carlos - SP\\
Email: {\tt josnei@dm.ufscar.br} \\\\

\end{document}